\newtheorem{satz}{Theorem}
\newtheorem{theorem}[satz]{Theorem}
\newtheorem{lemma}[satz]{Lemma}
\newtheorem{definition}[satz]{Definition}
\newtheorem{corollary}[satz]{Corollary}
\newcommand{\qed}{{} \hfill \mbox{$\Box$}}
\def\({\big (}
\def\){\big )}
\def\_phi{\varphi}
\title{Rank of matrices with entries from a multiplicative group}
\author
{Noga Alon
\thanks{  
Department of Mathematics, Princeton University, Princeton,
New Jersey, USA and Schools of Mathematics and Computer Science,
Tel Aviv University, Tel Aviv, Israel.
Email: nogaa@tau.ac.il}
\and
J\'{o}zsef Solymosi
\thanks{Department of Mathematics,
University of British Columbia, 1984 Mathematics Road,
Vancouver, BC, V6T 1Z2, Canada.
Email: solymosi@math.ubc.ca}}
\date{} 
\begin{document}
\maketitle

\begin{abstract}
    We establish lower bounds on the rank of matrices in which all but 
the diagonal entries lie in a multiplicative group of small rank.
Applying these bounds we show that the distance
sets of finite pointsets in $\mathbb{R}^d$
generate high rank multiplicative groups and that multiplicative
groups of small rank cannot contain large sumsets.
\end{abstract}

\section{Introduction}
Bounding the rank of matrices satisfying  appropriate conditions
is an important topic in linear algebra. Such
bounds have various applications in divers areas of mathematics.
Several 
examples of applications in combinatorics and computer science
appear in the following papers of the first author
\cite{Alon1,Alon2}. In the present paper we combine some of the
techniques of these papers with additional number theoretic and
combinatorial tools in the derivation of lower bounds 
on the rank of matrices in which all but the diagonal entries lie
in a
multiplicative group of small rank. 

\medskip

Throughout the paper all matrices considered are $n\times
n$ complex matrices, unless otherwise specified. Our main result is
the following theorem

\begin{theorem}
\label{main}
For any positive integers $r$ and $D$ there is a threshold $n_0=n_0(r,D)$,
such that if $G$ is a multiplicative subgroup of $\mathbb{C}^*$ of rank
at most $r$ and $M=(m_{ij})$ is an $n\times n$ matrix, $n\geq n_0,$
where $m_{ij}\in G$ for every $i\neq j$ and $m_{jj}\not\in G$  ($1\leq
i,j\leq n$), then $rank(M)\geq D.$
\end{theorem}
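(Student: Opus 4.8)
The plan is to prove the contrapositive by induction on the rank: for every integer $\rho\ge 0$ I will produce a bound $n_1=n_1(r,\rho)$ such that if $n\ge n_1$ and $M$ satisfies the hypotheses of the theorem (with $G$ of rank $\le r$) then $\mathrm{rank}(M)\ge\rho+1$; the theorem then follows with $n_0(r,D)=n_1(r,D-1)$. Since the off-diagonal entries form a finite subset of $G$, I may replace $G$ by the subgroup they generate and so assume $G$ is finitely generated of rank $\le r$, which only strengthens the hypothesis $m_{jj}\notin G$. The cases $\rho\le 1$ are elementary: if $\mathrm{rank}(M)\le 1$ and $n\ge 3$ then $M=uv^{\T}$, so $m_{ij}=u_iv_j$; for distinct $i,j,k$ one gets $v_j/v_k=m_{ij}/m_{ik}\in G$ and $u_i/u_j=m_{ik}/m_{jk}\in G$, hence all $u_i$ lie in one coset of $G$ and all $v_j$ in one coset, which forces $m_{11}=u_1v_1\in G$ --- a contradiction. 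So $n_1(r,0)=2$ and $n_1(r,1)=3$.

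For the inductive step, fix $\rho\ge 2$, assume $n_1(r,\rho-1)$ exists, and let $M$ be as in the theorem with $\mathrm{rank}(M)=\rho$ (smaller rank is done by induction once $n\ge n_1(r,\rho-1)$). Choose a set $J$ of $\rho$ columns of $M$ forming a basis of the column space, and for $j\notin J$ write $c_j=\sum_{k\in J}\gamma_{jk}c_k$, $T_j=\{k\in J:\gamma_{jk}\neq 0\}$. The device I will use repeatedly is: if a nontrivial linear combination of the basis columns vanishes on a set $Z$ of rows, then in the principal submatrix $M[Z,Z]$ every restricted column lies in the span of the $\rho$ restricted basis columns, which are now dependent, so $\mathrm{rank}\,M[Z,Z]\le\rho-1$; since $M[Z,Z]$ is again of the prescribed form, the induction hypothesis gives $|Z|<n_1(r,\rho-1)$. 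Now either some $j_0\notin J$ has $T_{j_0}=J$, or every $j\notin J$ has $|T_j|\le\rho-1$. In the latter case the pigeonhole principle (over the at most $2^{\rho}$ possible supports) yields a set $\mathcal J$ of at least $(n-\rho)/2^{\rho}$ columns $j$ sharing one support $T^*$ with $|T^*|\le\rho-1$; on the index set $T^*\cup\mathcal J$ every column of $M$ lies in the span of the $|T^*|$ columns indexed by $T^*$, so that principal submatrix has rank $\le\rho-1$, and induction bounds $|T^*\cup\mathcal J|$ and hence $n$. So I may assume some $j_0\notin J$ has $T_{j_0}=J$.

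With $j_0$ fixed, for each $i\in I^*:=[n]\setminus(J\cup\{j_0\})$ the identity $m_{ij_0}=\sum_{k\in J}\gamma_{j_0k}m_{ik}$ involves only off-diagonal entries; dividing by $m_{ij_0}\in G$ produces a solution $x_k=u_{ik}:=m_{ik}/m_{ij_0}\in G$ of the equation $\sum_{k\in J}\gamma_{j_0k}x_k=1$ in variables from $G$, all of whose coefficients are nonzero. At this point I invoke the quantitative bound of Evertse, Schlickewei and Schmidt for linear equations in variables from a multiplicative group: the number of \emph{non-degenerate} solutions --- those with no vanishing proper subsum --- is at most a constant $A(\rho,r)$ depending only on $\rho$ and $r$. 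Moreover the map $i\mapsto(u_{ik})_{k\in J}$ has fibres of size at most $2$: equal tuples force rows $i$ and $i'$ of $M$ to be proportional (each row is a fixed linear function of its $J$-coordinates since the columns in $J$ span), and three mutually proportional rows would give a rank-$1$ principal $3\times 3$ submatrix of the forbidden type, excluded by the base case. Hence at most $2A(\rho,r)$ indices $i$ give non-degenerate solutions, while every other $i$ satisfies $\sum_{k\in P}\gamma_{j_0k}m_{ik}=0$ for some $\emptyset\neq P\subsetneq J$, and for each of the fewer than $2^{\rho}$ choices of $P$ the device above bounds the number of such $i$ by $n_1(r,\rho-1)$. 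Therefore $n-\rho-1=|I^*|\le 2A(\rho,r)+2^{\rho}n_1(r,\rho-1)$, which bounds $n$; taking $n_1(r,\rho)$ larger than all the bounds that appeared completes the induction.

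I expect the number-theoretic input to be the main obstacle. One needs a finiteness bound for $S$-unit equations that is completely uniform, independent of the group and of the coefficients and depending only on the number of variables and on $r$; this is exactly what the subspace theorem (in the Evertse--Schlickewei--Schmidt form) provides, and something as crude as taking Hadamard powers fails precisely because it would make $n_0$ depend on $|G|$ when $G$ is finite. The delicate point is that degenerate solutions of such an equation are genuinely infinite in number, so they cannot be counted directly; the argument above reroutes them --- a degeneracy common to many rows is a vanishing combination of basis columns on many rows, hence a large low-rank principal submatrix of the forbidden type --- back into the induction on the rank, and getting this bookkeeping exactly right is where the real work lies.
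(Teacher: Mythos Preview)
Your argument is correct and follows a genuinely different route from the paper. The paper fixes a row basis, uses the Subspace Theorem to bound the number of distinct entries below the diagonal in each row of a suitably normalized submatrix, and then converts ``few distinct entries per row'' into a rank lower bound via the polynomial method (applying a degree-$s$ polynomial rowwise and bounding the rank of the result); its first proof uses Ramsey's theorem to synchronize the column normalizations, while its second proof replaces Ramsey by a Hadamard-product argument. You instead argue by induction on the rank: you apply the Subspace Theorem once to a single full-support column relation, and --- this is the key idea --- reroute all degenerate subsums, which the Subspace Theorem cannot count, back into the induction by observing that a vanishing subsum of basis columns common to many rows produces a large principal submatrix of strictly smaller rank and of the same forbidden type. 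This sidesteps both Ramsey's theorem and the polynomial method, making the proof more elementary. Quantitatively your recursion $n_1(r,\rho)\le 2A(\rho,r)+2^{\rho}\,n_1(r,\rho-1)+O(\rho)$ unrolls to a single exponential in $\rho$, matching (indeed slightly improving) the bound from the paper's second proof. The trade-off is that the paper packages the ``few values per row'' phenomenon into a standalone rank lemma that is reusable elsewhere (for instance in the applications of Section~5), whereas your proof is more tightly tied to the inductive structure of this particular statement.
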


We describe  two proofs of the theorem. 
The most important ingredient in both proofs is the Subspace Theorem for
linear equations with variables from a multiplicative group by Evertse,
Schlickewei and Schmidt \cite{ESS}. In addition, we use
an observation from the above mentioned paper \cite{Alon1} of the
first author and some additional 
simple combinatorial arguments and tools from linear algebra. 
As applications of Theorem \ref{main}
we prove that the distance sets of finite pointsets in $\mathbb{R}^d$
generate high rank multiplicative groups and that multiplicative
groups of small rank cannot contain large sumsets.

The rest of this short paper is organized as follows. In the next
section we describe the main ingredients of the proof of the main
result. Sections 3 and 4 contain two (similar) proofs of the result. 
The first is a bit simpler, the second provides a better
quantitative bound. Section 5 contains several applications and the final
section 6 contains some concluding remarks.

\section{The main tools}
In this section we describe the results which are the building blocks 
of the proof of Theorem \ref{main}.

\medskip

\begin{enumerate}
    \item The Subspace Theorem of Evertse, Schlickewei and Schmidt 
\cite{ESS}. We present the version with the best known 
bound due to Amoroso and Viada \cite{AV}.

\begin{theorem}
\label{Subspace}
Given an algebraically closed field $K$ and
a multiplicative subgroup $\Gamma$ of $K$ of finite rank $r$  in
it,
suppose $a_1, a_2,\ldots , a_m \in K^*.$ Then the
number of solutions of the equation

\begin{equation}
\label{lineq}
a_1z_1 + a_2z_2 + \ldots + a_mz_m = 1 
\end{equation}

with $z_i \in \Gamma$  
where no subsum on the left hand side vanishes is at most
$$A(m, r) \leq (8m)^{4m^4(m+mr+1)}\leq 2^{rm^5\log_c{m}},$$ for some 
absolute constant $c> 1.$
\end{theorem}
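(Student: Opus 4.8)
The statement to be established is the quantitative unit‑equation bound, so the plan is to \emph{deduce} it from the quantitative Subspace Theorem rather than to rebuild that machinery from scratch; I outline the architecture of Evertse--Schlickewei--Schmidt \cite{ESS} with the refined constants of Amoroso--Viada \cite{AV}.

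First I would reduce to a number‑field setting. It suffices to prove the bound when $\Gamma$ is finitely generated, since any finite collection of solutions involves only finitely many elements of $\Gamma$, which generate a subgroup of rank at most $r$; so if the count exceeded $A(m,r)$ we could pass to such a subgroup. A specialization argument (valid in characteristic $0$, the only case in which the count is finite) then lets me replace $K$ by a number field $L$ and $\Gamma$ by a finitely generated subgroup of $L^*$ of rank at most $r$, without increasing the number of non‑degenerate solutions. Enlarging a finite set $S$ of places of $L$ to contain the archimedean ones and all places at which some generator of $\Gamma$ or some $a_i$ fails to be a unit, I may assume $a_i\in L^*$ and $z_i\in O_S^*$, so that the equation becomes a non‑degenerate $S$‑unit equation. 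A crucial point is to phrase the count projectively: I view a solution as the point $[\,a_1z_1:\cdots:a_mz_m\,]\in\mathbb{P}^{m-1}(L)$ constrained by $x_1+\cdots+x_m=1$, together with the Weil height and the product formula, so that the final bound can depend only on $m$ and $r$ and not on the torsion of $\Gamma$ nor on the particular field.

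Next I would split the solutions by height into ``large'' and ``small'' relative to a threshold $T$ to be optimized. For the large solutions I apply the absolute quantitative Subspace Theorem: because the $z_i$ are $S$‑units and the height is large, the product of the relevant linear forms evaluated at $(a_1z_1,\ldots,a_mz_m)$ is small relative to the height, so all large solutions lie in a union of at most $N(m,|S|)$ proper linear subspaces of $L^m$, with $N$ explicit. On each such subspace the relation $x_1+\cdots+x_m=1$ restricts, after eliminating a coordinate and discarding vanishing subsums (using non‑degeneracy), to a unit equation in fewer than $m$ variables whose variables are again $S$‑units; this lets me induct on $m$, multiplying $N(m,|S|)$ by the inductive bound for $m'<m$. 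For the small solutions I use a gap principle together with geometry of numbers in $O_S^*/(\mathrm{torsion})\cong\Z^{|S|-1}$, restricted to the rank‑$r$ data coming from $\Gamma$: two solutions whose logarithmic vectors are close must be proportional, hence by non‑degeneracy equal, so covering a box of radius $T$ in $\R^{r}$ by fundamental domains bounds their number. This is the step that produces the dependence on $r$, and where the torsion is absorbed.

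Combining the two counts and optimizing $T$ gives a recursion of the shape $A(m,r)\le N(m,|S|)\,A(m-1,r)+(\text{small‑solution bound})$, where $|S|$ can be kept of size $O(m+mr)$; unwinding it yields the closed form $(8m)^{4m^4(m+mr+1)}\le 2^{rm^5\log_c m}$. The main obstacle lies entirely in the constants: the qualitative finiteness is classical, but extracting this clean exponent requires the sharp absolute quantitative Subspace Theorem of \cite{AV} to control $N(m,|S|)$ with the right shape, careful bookkeeping of the induction so that the accumulated factors do not blow up super‑exponentially in the recursion depth, and a gap principle precise enough that only the rank $r$, and not the order of $\Gamma_{\mathrm{tors}}$, enters the small‑solution count.
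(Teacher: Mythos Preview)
The paper does not prove Theorem~\ref{Subspace} at all: it is quoted as a known result, attributed to Evertse--Schlickewei--Schmidt \cite{ESS} with the explicit constant due to Amoroso--Viada \cite{AV}, and then used as a black box in the proofs of Theorem~\ref{main}. There is therefore nothing in the paper to compare your argument against; you have gone further than the paper by sketching the architecture of the cited proof rather than merely invoking it.

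That said, your outline is a faithful high-level summary of the Evertse--Schlickewei--Schmidt strategy (specialization to a number field, the large/small dichotomy in height, the quantitative Subspace Theorem for large solutions with induction on $m$ inside each exceptional subspace, and a gap/packing argument in the logarithmic lattice for small solutions), and you correctly flag that the characteristic~$0$ hypothesis is what makes the count finite. One caution: as written, the theorem in the paper says only ``algebraically closed field $K$'' and does not restrict the characteristic, so your parenthetical is really identifying an implicit hypothesis rather than a step of the proof; in the paper's applications $K=\mathbb{C}$, so this never bites. Also, your sketch stops short of the genuinely delicate part---tracking the constants through the recursion so that the final exponent has the shape $m^4(m+mr+1)$ with base $8m$---which is exactly the content of \cite{AV}; but since the paper itself makes no attempt at this and simply cites the bound, that is not a defect relative to the paper.
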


We apply the key feature of the theorem, that the bound $A(m,r)$
is a uniform bound, independent of the coefficients in
(\ref{lineq}). The Subspace Theorem is a powerful tool, it has several
important applications. For the interested reader we recommend the
excellent surveys by Bilu \cite{Bi} and by Bugeaud \cite{Bu}. For
some combinatorial applications see the survey by Schwartz and 
the second author \cite{SS}.

   \item The following well known bound for the multicolor
Ramsey numbers for complete graphs follows from the
neighborhood-chasing argument
in the classical paper of Erd\H{o}s and Szekeres \cite{ESz}.

\begin{theorem}\label{Ramsey}
For a positive integer $t$ let $R(t,\ell)$ denote the least integer such
that any $\ell$-coloring of the edges of the complete graph on $R(t,\ell)$
vertices contains a monochromatic complete subgraph of size $t.$ Then
$R(t,\ell)\leq \ell^{\ell t}.$
\end{theorem}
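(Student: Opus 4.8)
The plan is to run the greedy neighborhood-chasing argument of Erd\H{o}s and Szekeres directly with $\ell$ colors (throughout we take $\ell\ge 2$). Put $s=\ell(t-1)+1$ and let $N=\ell^{\ell t}$; I will show that every $\ell$-coloring of the edges of $K_N$ contains a monochromatic $K_t$, which yields $R(t,\ell)\le N$ --- in fact with room to spare, since only $N\ge \ell^{\ell(t-1)}$ will actually be used. First I would build, greedily and one vertex at a time, a sequence of distinct vertices $v_1,\dots,v_s$, a sequence of colors $c_1,\dots,c_s$, and a nested chain of ``live'' vertex sets $S_0\supseteq S_1\supseteq\dots\supseteq S_{s-1}$. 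Start with $S_0=V(K_N)$. Having chosen $v_1,\dots,v_{i-1}$ and $S_{i-1}$, pick any $v_i\in S_{i-1}$, consider the edges joining $v_i$ to the other vertices of $S_{i-1}$, let $c_i$ be a color attaining a largest class among them, and let $S_i$ be that class. The key feature of this construction is that every vertex of $S_i$ is joined to $v_i$ by an edge of color $c_i$; since the $S_i$ are nested and $v_j\in S_{j-1}\subseteq S_i$ whenever $j>i$, it follows that the edge $v_iv_j$ has color $c_i$ for all $i<j$.

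Next I would verify that the process does not terminate prematurely. If $|S_{i-1}|=n_{i-1}$ then $n_i=|S_i|\ge(n_{i-1}-1)/\ell$, with $n_0=N$. Using that each $n_i$ is an integer and $\ell\ge 2$, a one-line induction gives $n_i\ge \ell^{\,s-1-i}$ for $0\le i\le s-1$ whenever $N\ge \ell^{\,s-1}$; in particular $n_{s-1}\ge 1$, so $v_s$ can indeed be selected. Since $s-1=\ell(t-1)$, the hypothesis needed is $N\ge\ell^{\ell(t-1)}$, which is implied by $N=\ell^{\ell t}$. Hence all the vertices $v_1,\dots,v_s$ and colors $c_1,\dots,c_s$ exist.

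Finally I would apply the pigeonhole principle to the list $c_1,\dots,c_s$: it has length $\ell(t-1)+1$ and its entries lie in a palette of size $\ell$, so some color $c$ occurs at indices $i_1<i_2<\dots<i_t$. By the observation above, for all $a<b$ the edge $v_{i_a}v_{i_b}$ has color $c_{i_a}=c$, so $\{v_{i_1},\dots,v_{i_t}\}$ spans a monochromatic $K_t$. This proves $R(t,\ell)\le\ell^{\ell(t-1)}\le\ell^{\ell t}$. The argument is entirely elementary and I foresee no genuine obstacle; the only two points requiring a little care are the bookkeeping on the sizes $n_i$ --- one must lose only an additive constant, not an extra multiplicative factor, in passing from $n_{i-1}$ to $n_i$ --- and the final pigeonhole count $\lceil(\ell(t-1)+1)/\ell\rceil=t$.
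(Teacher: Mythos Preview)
Your argument is correct and is exactly the classical Erd\H{o}s--Szekeres neighborhood-chasing proof that the paper invokes (stated there without a detailed proof): iteratively pick a vertex, keep the largest monochromatic neighborhood, and finish with pigeonhole on the recorded colors. Your bookkeeping on the $n_i$ and the final pigeonhole count are both fine, and your restriction to $\ell\ge 2$ is appropriate since the stated bound is vacuous/false for $\ell=1$.
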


    \item Rank of matrices with few distinct entries in 
the lower triangular part outside the diagonal

\begin{theorem}\label{matrix}

Let $A$ be an $n\times n$ matrix where every row has at most $s$ distinct
values under the diagonal, and the element in the diagonal is different
from the elements in the row under the diagonal. If the rank of $A$
is $\varrho$ then
\[n\leq \binom{\varrho+s}{\varrho}.  \]

\end{theorem}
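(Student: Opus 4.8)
The plan is to run the polynomial argument of the first author from \cite{Alon1}. First I would use the rank hypothesis to factor $A$: since $\mathrm{rank}(A)=\varrho$, there are $n\times\varrho$ matrices $X,Y$ with $A=XY^{\T}$, so writing $x_i,y_j\in\C^{\varrho}$ for the rows of $X$ and $Y$ we have $a_{ij}=\langle x_i,y_j\rangle$ for all $i,j$. The point of this is to turn each entry into the value of a linear form evaluated at a point of $\C^{\varrho}$.

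Next, for each row $i$ let $\lambda_{i,1},\dots,\lambda_{i,s}$ be the values taken by $a_{ij}$ for $j<i$ (there are at most $s$ of them; if there are fewer, repeat one to make exactly $s$), and define $P_i\colon\C^{\varrho}\to\C$ by
\[
P_i(w)=\prod_{t=1}^{s}\bigl(\langle x_i,w\rangle-\lambda_{i,t}\bigr).
\]
Each $P_i$ is a polynomial of total degree at most $s$ in the $\varrho$ coordinates of $w$. Evaluating at $w=y_j$ gives $P_i(y_j)=\prod_{t}(a_{ij}-\lambda_{i,t})$, which is $0$ whenever $j<i$ (then $a_{ij}$ equals one of the $\lambda_{i,t}$) and is nonzero when $j=i$ (then $a_{ii}=\langle x_i,y_i\rangle$ differs from every $\lambda_{i,t}$, by the diagonal hypothesis). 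Thus the $n\times n$ matrix $\bigl(P_i(y_j)\bigr)_{i,j}$ is upper triangular with nonzero diagonal, hence invertible.

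It follows that $P_1,\dots,P_n$ are linearly independent as polynomials: a relation $\sum_i c_iP_i\equiv 0$ would yield $\sum_i c_iP_i(y_j)=0$ for every $j$, i.e. a nontrivial linear dependence among the rows of the invertible matrix above, which is impossible. Since the space of polynomials of degree at most $s$ in $\varrho$ variables has dimension $\binom{\varrho+s}{\varrho}$, and all the $P_i$ lie in it, we conclude $n\le\binom{\varrho+s}{\varrho}$. There is no real obstacle here; the only things to check carefully are that the rank factorization $A=XY^{\T}$ over $\C$ is legitimate and that "at most $s$ distinct values per row'' is exactly what is needed to keep $\deg P_i\le s$.
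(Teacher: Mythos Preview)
Your proof is correct and is essentially the paper's argument: both define, for each row $i$, a polynomial of degree at most $s$ that vanishes on the sub-diagonal entries of that row and is nonzero on the diagonal, producing an upper-triangular invertible matrix whose rank is then bounded by $\binom{\varrho+s}{\varrho}$. The only cosmetic difference is that the paper invokes Lemma~\ref{Noga} for this rank bound, whereas you unfold that lemma explicitly through the rank factorization $A=XY^{\T}$ and the dimension of the space of degree-$\le s$ polynomials on $\C^{\varrho}$.
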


To prove the theorem, let us recall a lemma from
\cite{Alon1} with its proof. In our second proof of Theorem
\ref{main} we describe a generalization of this lemma which
enables us to avoid the Ramsey argument.

\begin{lemma}
\label{Noga}
Let $B = (b_{i,j})$ be an $n$ by $n$ matrix of rank $d,$ and let $P(x)$
be an arbitrary polynomial of degree at most $k.$ Then the rank of the $n$
by $n$ matrix $(P(b_{i,j}))$ is at most $\binom{k+d}{k}.$ If $P(x)=x^k$
then the rank is at most $\binom{k+d-1}{k}.$
\end{lemma}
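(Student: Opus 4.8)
The plan is to use the low rank of $B$ to factor its entries and then expand $P$ entrywise. Since $\mathrm{rank}(B) = d$, one may write $B = UV$ with $U$ an $n\times d$ matrix and $V$ a $d\times n$ matrix, so that there are vectors $u_1,\dots,u_n, v_1,\dots,v_n \in \mathbb{C}^d$ with $b_{i,j} = \sum_{\ell=1}^{d} u_{i,\ell} v_{j,\ell}$ for all $i,j$.

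First I would handle a single power $P(x) = x^m$. Expanding by the multinomial theorem,
\[
b_{i,j}^{m} = \left(\sum_{\ell=1}^{d} u_{i,\ell} v_{j,\ell}\right)^{m} = \sum_{|\alpha| = m} \binom{m}{\alpha} \left(\prod_{\ell=1}^{d} u_{i,\ell}^{\alpha_\ell}\right)\left(\prod_{\ell=1}^{d} v_{j,\ell}^{\alpha_\ell}\right),
\]
the sum ranging over the $d$-tuples $\alpha = (\alpha_1,\dots,\alpha_d)$ of nonnegative integers with $\alpha_1 + \dots + \alpha_d = m$. For each fixed $\alpha$ the matrix with $(i,j)$ entry $\big(\prod_\ell u_{i,\ell}^{\alpha_\ell}\big)\big(\prod_\ell v_{j,\ell}^{\alpha_\ell}\big)$ is an outer product of a vector depending only on $i$ with a vector depending only on $j$, hence has rank at most $1$. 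Since the number of such $\alpha$ equals $\binom{m+d-1}{d-1} = \binom{m+d-1}{m}$, the matrix $(b_{i,j}^{m})$ is a sum of at most that many rank-one matrices, so its rank is at most $\binom{m+d-1}{m}$; taking $m = k$ gives the second assertion of the lemma.

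For a general $P(x) = \sum_{m=0}^{k} c_m x^m$ I would then write $(P(b_{i,j})) = \sum_{m=0}^{k} c_m (b_{i,j}^{m})$ and apply subadditivity of rank, using the bound just obtained for each $m$ (the constant term contributes rank at most $1 = \binom{d-1}{0}$). This yields
\[
\mathrm{rank}\big((P(b_{i,j}))\big) \le \sum_{m=0}^{k} \binom{m+d-1}{m} = \sum_{m=0}^{k} \binom{m+d-1}{d-1} = \binom{k+d}{d} = \binom{k+d}{k},
\]
the penultimate equality being the hockey-stick identity.

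I do not expect a genuine obstacle here: the argument rests only on the rank factorization of $B$, the observation that each multinomial monomial splits its $i$-dependence from its $j$-dependence and hence spans a rank-one matrix, a count of the monomials of given degree, and subadditivity of rank. The only point that needs a little care is the binomial bookkeeping — using $\binom{m+d-1}{d-1}$ for the number of degree-$m$ monomials in $d$ variables and collapsing the sum by hockey-stick — and this is entirely routine.
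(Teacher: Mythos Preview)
Your proof is correct and is essentially the same argument as the paper's: both use the rank-$d$ factorization of $B$ and expand powers of $b_{i,j}$ into monomials in the $d$ coordinates, counting those monomials to bound the rank. The only cosmetic difference is that the paper phrases this as ``the rows of $(P(b_{i,j}))$ lie in the span of the $\binom{k+d}{k}$ monomial vectors $(v_{1,j}^{k_1}\cdots v_{d,j}^{k_d})_j$ with $k_1+\cdots+k_d\le k$,'' counting these monomials directly, whereas you write the matrix as a sum of rank-one outer products degree by degree and then collapse $\sum_{m=0}^k \binom{m+d-1}{d-1}$ via the hockey-stick identity; the content is identical.
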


\begin{proof}
Let ${\bf v_1} = (v_{1,j})^n_{j=1},$ ${\bf v_2} = (v_{2,j})^n_{j=1},$
$\ldots ,$ ${\bf v_d} = (v_{d,j})^n_{j=1}$ be a basis of the row-space
of $B$. Then the vectors $(v^{k_1}_{1,j}\cdot v^{k_2}_{2,j} \ldots \cdot
v^{k_d}_{d,j})^n_{j=1},$ where $k_1, k_2, \ldots , k_d$ range over all
non-negative integers whose sum is at most $k,$ span the rows of the
matrix $(P(b_{i,j})).$ If we have $P(x)=x^k$ then we only have to use
the exponents whose sum is exactly $k.$
\qed
\end{proof}

\medskip
\begin{proof} (of Theorem \ref{matrix})
Note that while Lemma \ref{Noga} is stated for a single polynomial
$P(x),$ it is used independently in every row. So, the same result
holds if one applies different, degree $\leq k$, polynomials in every
row. To prove Theorem \ref{matrix} let us define a polynomial for every
row. For row $i$, if the distinct elements under the diagonal are
denoted by $\alpha_1,\ldots,\alpha_m$ then define $P_i(x)$ by
$P_i(x)=\prod_{i=1}^m (x-\alpha_i) .$  Every polynomial has degree at
most $s,$ and the matrix, after applying the polynomials row-wise,
is a matrix with non-zero diagonal entries and zeros under the
diagonal, so it has full rank. If $A$ had rank $\varrho$ then $n\leq
\binom{\varrho+s}{\varrho}.$
\qed
\end{proof}

\item
The rank of Hadamard products of matrices.

For two matrices $A=(a_{ij})$ and $B=(b_{ij})$ with the same
number of rows and columns, the {\em Hadamard product} (the
element-wise product) of $A$ and $B$, denoted
by $A\bullet B$, is the matrix $A\bullet B=(a_{ij} \cdot b_{ij})$.
The following property of this product, mentioned by Ballantine
in \cite{Ba} , follows from the fact that
$A\bullet B$ is a submatrix of the tensor product of $A$ and $B$,
whose rank is the product of the ranks of the two matrices.
\begin{lemma} 
\label{Had}
For any two matrices $A$ and $B$ of the same dimension,
$rank(A\bullet B)\leq rank(A)\cdot rank(B). $  
\end{lemma}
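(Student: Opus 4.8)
The plan is to follow the hint already given: exhibit $A\bullet B$ as a submatrix of the Kronecker (tensor) product $A\otimes B$ and use that the rank of a Kronecker product multiplies. Suppose $A=(a_{ij})$ and $B=(b_{ij})$ both have size $p\times q$. Recall that $A\otimes B$ is the $p^2\times q^2$ matrix whose entry in row $(i,k)$ and column $(j,\ell)$ is $a_{ij}b_{k\ell}$. First I would record the standard fact that $rank(A\otimes B)=rank(A)\cdot rank(B)$; for the present purpose only the inequality ``$\le$'' is needed, and this is immediate from rank decompositions $A=\sum_{s=1}^{rank(A)} x_s y_s^{\T}$, $B=\sum_{t=1}^{rank(B)} u_t v_t^{\T}$ together with the mixed-product identity $(x_s y_s^{\T})\otimes(u_t v_t^{\T})=(x_s\otimes u_t)(y_s\otimes v_t)^{\T}$, which writes $A\otimes B$ as a sum of $rank(A)\cdot rank(B)$ matrices of rank at most one.

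Next I would restrict $A\otimes B$ to the ``diagonal'' index sets: keep only the rows indexed by the pairs $(i,i)$, $1\le i\le p$, and the columns indexed by the pairs $(j,j)$, $1\le j\le q$. The resulting $p\times q$ submatrix has $(i,j)$ entry $a_{ij}b_{ij}$, i.e.\ it is precisely $A\bullet B$. Since deleting rows and columns never increases the rank, $rank(A\bullet B)\le rank(A\otimes B)=rank(A)\cdot rank(B)$.

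A second, self-contained route avoids Kronecker products entirely: by bilinearity of the Hadamard product, $A\bullet B=\sum_{s,t}(x_s y_s^{\T})\bullet(u_t v_t^{\T})$ for the outer-product decompositions above, and $(x y^{\T})\bullet(u v^{\T})=(x\bullet u)(y\bullet v)^{\T}$ has rank at most one, so $A\bullet B$ is again a sum of at most $rank(A)\cdot rank(B)$ matrices of rank at most one. I do not expect any genuine obstacle here; the only mild care needed is the index bookkeeping identifying $A\bullet B$ inside $A\otimes B$ (equivalently, verifying the rank-one identity $(x y^{\T})\bullet(u v^{\T})=(x\bullet u)(y\bullet v)^{\T}$), which is routine.
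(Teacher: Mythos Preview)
Your argument is correct and follows exactly the route the paper indicates: it identifies $A\bullet B$ as the submatrix of $A\otimes B$ on the diagonal index pairs and uses that $rank(A\otimes B)=rank(A)\cdot rank(B)$. The alternative rank-one decomposition you give is also fine and amounts to the same thing unpacked.
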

\end{enumerate}

\medskip{}
After collecting the main required ingredients 
we are ready to prove Theorem \ref{main}.

\section{First proof of Theorem \ref{main}}
We start with a rough outline of the proof. 
First we choose a subset of row vectors forming a basis, $B,$ of the
row space of $M.$ Adding any other row vector to the basis, there is
a nontrivial linear form, $\Lambda$ of the vectors in $B$ and the
new row giving the zero
vector. Checking the linear combinations coordinate-wise, we are hoping
to have many equations of the form like in (\ref{lineq}).  We partition
the elements of the matrix based on the subset of $B$ which gives a zero
sum in $\Lambda$ without zero subsums. 
Using the Subspace Theorem while focusing on a submatrix chosen by 
an appropriate application of Ramsey's Theorem we
bound the number of
distinct elements below the diagonal in each row of the submatrix.
As the last step we apply the rank bound from
Theorem \ref{matrix}. The detailed argument follows.

\medskip

Let $d$ denote the rank of $M$ and let $B=\{{\bf v_1} =
(v_{1,j})^n_{j=1},$ ${\bf v_2} = (v_{2,j})^n_{j=1},$ $\ldots ,$ ${\bf v_d}
= (v_{d,j})^n_{j=1}\}$ be a basis of the row-space of $M$. Without
loss of generality assume this basis consists of the first 
$d$ rows of $M$.
If ${\bf w}
= (w_{j})^n_{j=1},$ is any other row of $M$, outside the basis, 
then there is a
linear form, $\Lambda,$ with coefficients $c_0\neq 0,c_1,c_2,\ldots ,c_d$
such that

\begin{equation}
\label{first_lin}
c_0{\bf w}+c_1{\bf v_1}+\ldots+c_d{\bf v_d}={\bf 0}.
\end{equation}

In this vector equation let us consider the $n-d-1$ equations out of
the $n$ coordinate-wise equations, where none of the diagonal elements
appears. In the $i$-th coordinate of the vector equation
there is a nonempty
index set $I\subset [d]$, so that we have an equation of the form

\begin{equation}
\label{lineq2}
    c_0w_i+\sum_{\ell\in I\subset [d]}c_\ell v_{\ell i}=0
\end{equation}
without any subsum adding up to zero.  (Note that $w_i \neq 0$ 
as it belongs to the multiplicative subgroup $G$). 

\medskip
We label the matrix element $w_i$ with an element of the index set
$I.$ (We can choose, for example, the first element of $I.$) 
In this way any non-diagonal
element of the matrix outside of the coordinates of the basis
receives
a label, an element of $[d].$ Now we are looking for a large principal
submatrix \footnote{a submatrix sharing the diagonal with $M$} with the
same indices under the diagonal.

\medskip
The lower triangular submatrix of the labels (with zeros
in the diagonal), can be viewed as the edge coloring of a complete
graph on $n-d$ vertices with at most  $d$ colors. By Theorem
\ref{Ramsey} there is a principal submatrix
of $M,$ denoted $U$, of size at least
\[
\frac{\log{(n-d)}}{d\log{d}}.
\]

In $U,$ every element under the diagonal has the same label, $\ell\in
[d]$ . Before we can apply the Subspace Theorem, we need one
additional
step, as follows. Divide every element $w_i$ of $U$ that belongs to 
column number $j$ of the original matrix $M$
by $c_\ell v_{\ell j}$, where $c_{\ell}$ is the coefficient of
$v_{\ell}$ in the expression (\ref{lineq2}). 
(Note that $c_\ell v_{\ell j} \neq 0$ as no subsum in 
\ref{lineq2} is $0$). 
The modified submatrix obtained this way from $U$ is denoted by $U'.$ 
Its rank is at most $d$, as it is obtained from $U$ (whose rank is
at most $d$) by first
dividing every column by a constant, and then by dividing every 
row by a constant. Note also that after dividing the column number
$j$ by $v_{\ell j}$, all non-diagonal elements of the column belong
to the multiplicative subgroup $G$ while the diagonal element is
not in $G$. Therefore, even after dividing the row by $c_\ell$, the
diagonal element stays different from the non-diagonal ones in the
row.  Consider the entries under the diagonal in $U'.$ 
If ${\bf u}$ is a row vector of $U'$ then there are
coefficients $a_1,\ldots, a_d$ such that if
$u_i$ is a  coordinate under the diagonal,
then there is an index set $J\subset [d]\setminus\ell$ such that

\begin{equation}\label{lineq3}
    a_0u_i+\sum_{j\in J}a_j v_{ji}=1,
\end{equation}
and no subsum on the left side is zero. We partition the coordinate-wise
equations for the selected ${\bf u}$ into no more than $2^{d-1}$ classes
based on the subset $J\subset [d].$ Every non-diagonal 
element in row ${\bf
u}$  satisfies the equation (\ref{lineq3}) for some index set $J.$ 
By Theorem \ref{Subspace} we know that for $|J|\geq 1$
there are no more than  $A(|J|+1,r)$ nontrivial solutions for the linear
equation in (\ref{lineq3}) with $u_i, v_{ji}$ in the multiplicative
subgroup $G$, therefore row $\bf u$ contains no more than
$2^{d-1}A(d,r)$ distinct entries under the diagonal. (If
$J=\emptyset$ then there is only one solution to (\ref{lineq3}).) 
Applying
Theorem \ref{matrix} we conclude that since the rank of  $M$ 
is $d$ then
\[{\frac{\log{(n-d)}}{d \log{d}} }\leq 
\binom{d+2^{d-1}A(d,r)}{d}\leq 2^{rd^6\log_{c}{d}},
\]
with some absolute constant $c>0.$ Therefore, if $n$ is
sufficiently large then 
\[
\log\log{n}\leq rd^{7}.
\]

This completes the proof of Theorem \ref{main}. 
\qed

\medskip
The quantitative bound we get from the proof is quite weak.
It would be very interesting to get better bounds even for cases where
the rank of the multiplicative group is very small, for example when
all non-diagonal elements are powers of two. In the next section we 
improve the bound by avoiding the application of Ramsey's Theorem. The
proof is similar though slightly more complicated, and gives a better 
bound.

\section{An improved bound}

The reason we had to apply Ramsey's Theorem (Theorem \ref{Ramsey})
in the proof of Theorem \ref{main} is
that in a row ${\bf w},$ while all coordinate entries satisfy equation
(\ref{first_lin}), it might be that there is a zero subsum with $w_i$,
so the Subspace Theorem is not directly applicable. We have thus selected a
principal submatrix where the entries had the same label, enabling
us to apply the Subspace Theorem. In order to improve the
quantitative estimate we replace the application of
Ramsey's Theorem by a linear algebra  argument based on Lemma
\ref{Had}. This enables us to
record all the required information without the consideration of
small submatrices of $M$. We need the following extension of
Lemma \ref{Noga}.

\subsection{Rank under pointwise application of multivariate polynomials}

Let ${\bf z_i}=(z_{i1}, z_{i2}, \ldots ,z_{in})$, $1 \leq i \leq r$,  be 
$r$ vectors over a field $F$, and let $Q(x_1,x_2, \ldots ,x_r)$ be a
multivariate polynomial in $F[x_1,x_2, \ldots x_r]$. The vector
${\bf u}=Q(\bf z_1,\bf z_2,\ldots, \bf z_r)$ is the vector  
${\bf u}=(u_1,u_2, \ldots ,u_n)$ defined by
$u_j=Q(z_{1j},z_{2j}, \ldots ,z_{rj})$.  Thus $u$ is obtained by applying
the polynomial $Q$ to the vectors $\bf v_i$ coordinate-wise.
\begin{theorem}
\label{t11}
Let $A_1, A_2, \ldots ,A_r$ be $r$ matrices over a field $F$, where 
each $A_i$ has $n$ columns, and let $d_i$ be the rank of $A_i$.
Let $A$ be a matrix with $n$ columns in which every row ${\bf u}$ is 
$Q_{\bf u}(\bf z_1, \bf z_2, \ldots , \bf z_r)$ where $\bf z_i$ is some
row of the matrix  $A_i$ and $Q_{\bf u}$ is some polynomial in
$F[x_1,x_2, \ldots ,x_r]$. If the degree of each of the polynomials
$Q_{\bf u}$ in $x_i$ is at most $k_i$, then the rank of $A$ is at most
$$
\prod_{i=1}^r {{k_i+d_i} \choose {d_i}}.
$$
\end{theorem}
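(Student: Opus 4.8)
The plan is to mimic the proof of Lemma~\ref{Noga}, but now working simultaneously with bases of all $r$ matrices and with a multivariate polynomial. First I would fix, for each $i$, a basis ${\bf b}_{i,1}, \ldots, {\bf b}_{i,d_i}$ of the row space of $A_i$, so that every row ${\bf z}_i$ of $A_i$ is a linear combination ${\bf z}_i = \sum_{t=1}^{d_i} \lambda_t^{(i)} {\bf b}_{i,t}$. Then a row ${\bf u} = Q_{\bf u}({\bf z}_1, \ldots, {\bf z}_r)$ of $A$ has $j$-th coordinate $u_j = Q_{\bf u}\big(\sum_t \lambda_t^{(1)} b_{1,t,j}, \ldots, \sum_t \lambda_t^{(r)} b_{r,t,j}\big)$. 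Substituting the linear expressions into $Q_{\bf u}$ and expanding, $u_j$ becomes a polynomial (with coefficients depending on ${\bf u}$ through the $\lambda^{(i)}_t$ and the coefficients of $Q_{\bf u}$) in the values $b_{i,t,j}$, where for each fixed $i$ the total degree in the variables $\{b_{i,t,j} : 1 \le t \le d_i\}$ is at most the degree of $Q_{\bf u}$ in $x_i$, hence at most $k_i$.

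The key step is then to observe that every such ${\bf u}$ lies in the span of a fixed set of vectors that does not depend on ${\bf u}$: namely the vectors
\[
{\bf b}_{1}^{\,\alpha^{(1)}} \bullet {\bf b}_{2}^{\,\alpha^{(2)}} \bullet \cdots \bullet {\bf b}_{r}^{\,\alpha^{(r)}},
\]
where for each $i$, $\alpha^{(i)} = (\alpha^{(i)}_1, \ldots, \alpha^{(i)}_{d_i})$ ranges over all tuples of non-negative integers with $\sum_t \alpha^{(i)}_t \le k_i$, and ${\bf b}_i^{\,\alpha^{(i)}}$ denotes the coordinate-wise product $\prod_t ({\bf b}_{i,t})^{\alpha^{(i)}_t}$ (a single vector of length $n$). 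Indeed, after expanding $u_j$ as a polynomial in the $b_{i,t,j}$ with the degree constraints above, each monomial appearing is exactly a coordinate of one of these product-vectors, and the coefficients are scalars independent of the coordinate $j$. Hence the row space of $A$ is contained in the span of this fixed collection, so $\mathrm{rank}(A)$ is at most the number of such vectors, which is $\prod_{i=1}^{r} \binom{k_i + d_i}{d_i}$ since the number of $\alpha^{(i)}$ with $\sum_t \alpha^{(i)}_t \le k_i$ in $d_i$ non-negative integer variables is $\binom{k_i+d_i}{d_i}$.

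I expect the main (though modest) obstacle to be purely bookkeeping: verifying carefully that after the substitution and expansion, the per-$i$ degree constraint of $Q_{\bf u}$ in $x_i$ really does translate into the total-degree-$\le k_i$ constraint on the group of variables $\{b_{i,t,j}\}_t$, and that the multi-indices from different $i$'s combine multiplicatively so that the spanning vectors are genuinely Hadamard products of the single-matrix product-vectors. No deep input is needed here — this is the natural common generalization of Lemma~\ref{Noga} (the case $r=1$) and of Lemma~\ref{Had} (the case where each $Q_{\bf u}$ is a single monomial of degree one in each $x_i$, recovering $\mathrm{rank} \le \prod d_i$). One should also note, as in the remark following Lemma~\ref{Noga}, that the polynomials $Q_{\bf u}$ are allowed to differ from row to row, which is fine because the spanning set depends only on the bases and the degree bounds $k_i$, not on the individual $Q_{\bf u}$.
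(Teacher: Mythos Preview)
Your proposal is correct and is essentially the paper's argument unpacked: the paper invokes Lemma~\ref{Noga} to bound, for each $i$, the rank of the matrix $B_i$ whose rows are the coordinate-wise powers $\mathbf{z}_i^{e}$ ($0\le e\le k_i$) by ${k_i+d_i\choose d_i}$, and then applies Lemma~\ref{Had} to conclude that the span of all Hadamard products of one row from each $B_i$ has dimension at most $\prod_i{k_i+d_i\choose d_i}$, noting every row of $A$ lies in this span. Your direct expansion after fixing bases for the row spaces is exactly what these two lemmas do internally, so the approaches coincide.
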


\begin{proof}
If  $r=1$ the result is proved in Lemma \ref{Noga} following the
argument in \cite{Alon1}).
Therefore, if
$B_i$ is  the matrix whose rows are all vectors of the form
$Q(\bf v_i)$ with $\bf v_i$ being a row of $A_i$ and $Q$ being
a polynomial  in $\{1,x,x^2, \ldots ,x^{k_i}\}$ then the rank of
$B_i$ is at most $ {{k_i+d_i} \choose {d_i}}$.
By Lemma \ref{Had} the linear space spanned by all 
Hadamard products of one vector 
from each $B_i$ has dimension at most
$$
\prod_{i=1}^r {{k_i+d_i} \choose {d_i}}.
$$
Every row of $A$ lies in this linear space, implying 
the desired result. \qed
\end{proof}

\medskip

\subsection{A modified proof of Theorem \ref{main}}

Let $M=(m_{ij})$ be a matrix satisfying the assumptions of Theorem
\ref{main}, let $d$ denote its rank, and assume, as before, that 
the first $d$ rows of $M$ form a basis of its row-space.
Denote these rows by 
$$
\{{\bf v_1} = (v_{1,j})^n_{j=1}, {\bf v_2}
= (v_{2,j})^n_{j=1}, \ldots ,{\bf v_d} = (v_{d,j})^n_{j=1}\}.
$$ 
Define $d$ matrices $M_1,M_2, \ldots ,M_d$, each having
$n-d$ rows and $n-d$ columns, as follows. For each $1 \leq \ell \leq
d$ the matrix $M_{\ell}$ has its rows and columns indexed by the integers
$j$ satisfying $d< j \leq n$. 
The element $M_{\ell}(i,j)$ is
defined as $m_{ij}/v_{\ell j}$. Thus the column with index $j$ of 
$M_{\ell}$ is obtained from the corresponding column of the matrix 
obtained from $M$ by deleting its first $d$ rows, by
dividing all elements of this column by $v_{\ell j} =m_{\ell j}$. 
Note that each $m_{\ell j}$ is nonzero, as it belongs to the
multiplicative subgroup $G$. It is clear that the rank of each
matrix $M_{\ell}$ is at most $d$, which is the rank of $M$.
We next define $d$ matrices $A_1,A_2, \ldots ,A_d$ of the same
dimension as the matrices $M_{\ell}$, 
where each
row of $A_{\ell}$ is a multiple of the corresponding row of 
$M_{\ell}$, as follows. 
Let ${\bf w}$ be an arbitrary row of the original matrix $M$ which is
not among the first $d$ rows $\bf v_i$.  Then ${\bf w}$ satisfies an
equation of the form (\ref{first_lin}). If $c_{\ell} \neq 0$ then
the row of $A_{\ell}$ corresponding to $\bf w$ is obtained 
from the corresponding row of $M_{\ell}$ by dividing it  by 
$c_{\ell}$. Otherwise (that is, if $c_{\ell}=0$) let this row
equal the corresponding row of $M_{\ell}$ as it is. It is clear
that the rank of each of the matrices $A_{\ell}$ is at most $d$.

We next define for each row ${\bf w}$ and each index $\ell$ a set
$S_{{\bf w},\ell}$ of at most $2^{d-1} A(d,r)$ elements so that the
following holds.  
\begin{enumerate}
\item
For every row ${\bf w}$ the diagonal coordinate of it
in each of the matrices $A_{\ell}$ does not belong to
$S_{{\bf w},\ell}$.
\item
For every row ${\bf w}$ and every non-diagonal element  
of it there exists at least one index $\ell$ so that
it belongs to the set $S_{{\bf w},\ell}$.
\end{enumerate}

The sets $S_{\bf w, \ell}$ are defined 
using the Subspace  Theorem, by repeating the
arguments in the previous proof of the theorem.
Indeed, each non-diagonal coordinate
$w_i$ of ${\bf w}$ for  $i>d$ satisfies an equation of the form
(\ref{lineq2}) without any subsum adding up to zero. For each such
coordinate there is at least one index $\ell$ so that 
$c_{\ell} \neq 0$.  We partition the coordinates according to the
specific subset of indices $I$ (which contains $\ell$) 
and use the Subspace Theorem to conclude that
the total number of distinct values of coordinates in the row with 
this set of indices $I$ 
is at most $A(d,r)$. The union, over all $2^{d-1}$ 
such subsets, of all these sets of values, is the set
$S_{\bf w,\ell}$.  It is clear that it satisfies property (2) above.
In addition, each diagonal element of every matrix $M_{\ell}$ differs from 
all non-diagonal  elements in the same row, as the non-diagonal 
elements lie in the 
group $G$ whereas the diagonal ones do not. Therefore each
diagonal element of every matrix $A_{\ell}$  differs from all
non-diagonal elements of this matrix in the same row, implying that
property (1) holds as well.

Finally we define, for each row ${\bf w}$,  the following
polynomial
$$
Q_{\bf w}(x_1,x_2, \ldots ,x_d)=
\prod_{\ell=1}^d \prod_{s \in S_{\bf w, \ell}} (x_\ell-s).
$$
In the notation of Theorem \ref{t11}, 
$Q_{\bf w} (\bf z_1,\bf z_2, \ldots ,\bf z_d)$, where $\bf z_\ell$ 
is the row 
corresponding to ${\bf w}$ in the matrix $A_{\ell}$,
is a vector whose only nonzero coordinate is in the diagonal.
By Theorem \ref{t11} the rank of the $(n-d) \times (n-d)$ matrix
consisting of all these rows, which is $n-d$, is
at most $ {{d+2^{d-1}A(d,r)} \choose d}^d$.  Therefore
$$
n-d \leq {{d+2^{d-1}A(d,r)} \choose d}^d \leq 2^{rd^7 \log_c d}
$$ 
for some absolute constant  $c>1$. It follows that for sufficiently
large $n$, $\log n \leq rd^8$ implying that
$d \geq (\frac{\log n}{r})^{1/8}.$ 

This completes the proof of the theorem with the improved bound.
\qed

\section{Applications}

\subsection{Sumsets in multiplicative groups}

The Subspace Theorem has been used in Additive Combinatorics in problems
related to the Sum-Product problem, showing the ``incompatibility''
of multiplicative and additive structures. Such applications started
with the paper of Chang \cite{Ch} where she proved that sets with small
product set have large sumsets. For a finite set, $A\subset \mathbb{C},$
the sumset is defined as $\{A+A\}=\{a+b : a,b\in A\}.$ The difference set
and product set are defined in the same way, one considers the pairwise
differences and products. Roche-Newton and Zhelezov \cite{RZ} proved that
multiplicative subgroups  $\Gamma\subset \mathbb{C}^*$ with small rank
cannot contain large difference sets. There is a function $f(x)$ such that
if $rank(\Gamma)\leq r$ and $\{A-A\}\subset \Gamma$ then $|A|\leq f(r).$
In their proof they also applied the Subspace Theorem. Here we prove
the similar statement for $\{A+A\}.$ For this we need an extension of
Theorem \ref{main}. In the last step of the proof we only used that the
diagonal elements are different from the other elements in that
row (or only those under the diagonal, in the first proof). 
The only step where we changed a diagonal entry
(without changing the other elements of the row in the same way)
was when we multiplied every element of column $j$ by $v_{\ell j}^{-1}.$
Therefore in the theorem we can replace the condition that diagonal
elements are  not from $G$ by a weaker one.

\begin{definition}
An $n\times n$ matrix with elements $\{a_{ij}\}_{i,j=1}^n$ satisfies the
rectangle condition if for any $i<j\neq k$ indices $a_{jj}a_{ik}\neq
a_{j,k}a_{ij}.$
\end{definition}

\begin{theorem}
\label{main_2}
For any positive integers $r$ and $D$ there is a threshold $n_0=n_0(r,D)$,
such that if $G$ is a multiplicative subgroup of $\mathbb{C}^*$ of rank
at most $r$ and $M=(m_{ij})$ is an $n\times n$ matrix, $n\geq n_0,$
where $m_{ij}\in G$ for every $i\neq j$ and $M$ satisfies the
rectangle condition, then $rank(M)\geq D.$
\end{theorem}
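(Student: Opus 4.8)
The plan is to adapt the proof of Theorem~\ref{main} with only a minor modification, exactly as the paragraph preceding the statement suggests. Recall that in both proofs of Theorem~\ref{main} the hypothesis that the diagonal entries lie outside $G$ was used in precisely one place: after dividing column $j$ by $v_{\ell j}=m_{\ell j}$ (which multiplies entry $m_{ij}$ by $v_{\ell j}^{-1}$) and then dividing each row by the appropriate nonzero scalar $c_{\ell}$, we needed the resulting diagonal entry to be distinct from the non-diagonal entries in its row, so that the polynomials $P_i$ (or $Q_{\bf w}$) would not annihilate the diagonal coordinate. The rectangle condition is exactly the hypothesis-weakening that preserves this one needed fact.

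First I would set up the argument as in Section~4: let $d=\operatorname{rank}(M)$, assume the first $d$ rows ${\bf v_1},\dots,{\bf v_d}$ form a basis of the row-space, form the matrices $M_{\ell}$ by dividing column $j$ by $v_{\ell j}=m_{\ell j}$, then the matrices $A_{\ell}$ by dividing the row corresponding to ${\bf w}$ by $c_{\ell}$ when $c_{\ell}\neq 0$. Everything about ranks ($\operatorname{rank}(M_{\ell})\le d$, $\operatorname{rank}(A_{\ell})\le d$) goes through verbatim, and the construction of the sets $S_{{\bf w},\ell}$ via the Subspace Theorem is unchanged, so property~(2) — that every non-diagonal entry of ${\bf w}$ lies in some $S_{{\bf w},\ell}$ — holds as before. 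The only point requiring a new argument is property~(1): that the diagonal coordinate of ${\bf w}$ in $A_{\ell}$ avoids $S_{{\bf w},\ell}$.

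The key step is to check that the rectangle condition yields property~(1). Fix a row index $j>d$ (the row ${\bf w}$) and an index $\ell$ with $c_{\ell}\neq 0$. In $M_{\ell}$ the diagonal entry in row $j$ is $m_{jj}/v_{\ell j}=m_{jj}/m_{\ell j}$, while a non-diagonal entry in column $k\neq j$ is $m_{jk}/m_{\ell k}$. After dividing the whole row by $c_{\ell}$ (which affects diagonal and non-diagonal entries identically and so cannot make them coincide if they didn't before), the condition that the diagonal entry differ from the column-$k$ entry is
\[
\frac{m_{jj}}{m_{\ell j}}\neq \frac{m_{jk}}{m_{\ell k}},
\qquad\text{i.e.}\qquad
m_{jj}\,m_{\ell k}\neq m_{jk}\,m_{\ell j},
\]
which is precisely the rectangle condition applied to the triple of indices $\ell<j$ (or $j<\ell$) and $k$. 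Since $S_{{\bf w},\ell}$ is built only from the non-diagonal entries of this row of $M_{\ell}$ (scaled by $1/c_{\ell}$), the diagonal entry of $A_{\ell}$ in this row is distinct from every element of $S_{{\bf w},\ell}$, giving property~(1). I should be careful that the rectangle condition is stated for indices $i<j\neq k$, whereas here $\ell$ may be larger than $j$; but the condition $a_{jj}a_{ik}\neq a_{jk}a_{ij}$ is symmetric under swapping the roles appropriately (or one simply notes that the relevant inequality $m_{jj}m_{\ell k}\neq m_{jk}m_{\ell j}$ is what is needed and is equivalent to an instance of the stated condition regardless of whether $\ell<j$ or $\ell>j$), so this is a routine bookkeeping point rather than a real obstacle.

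With properties~(1) and~(2) in hand, the remainder is identical to Section~4: define $Q_{\bf w}(x_1,\dots,x_d)=\prod_{\ell=1}^d\prod_{s\in S_{{\bf w},\ell}}(x_{\ell}-s)$, apply it coordinate-wise to the rows of $A_1,\dots,A_d$ corresponding to ${\bf w}$, observe that by property~(2) all non-diagonal coordinates are killed and by property~(1) the diagonal coordinate survives, so the resulting $(n-d)\times(n-d)$ matrix has full rank $n-d$; then Theorem~\ref{t11} bounds this rank by $\binom{d+2^{d-1}A(d,r)}{d}^d\le 2^{rd^7\log_c d}$, whence $\log n\le rd^8$ for large $n$ and the theorem follows. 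I expect no genuine obstacle here — the content of the proof is entirely the observation that the rectangle condition is the exact combinatorial hypothesis needed for property~(1), and the main thing to get right is the index-symmetry of the rectangle condition in the argument above.
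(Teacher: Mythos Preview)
Your proposal is correct and follows essentially the same approach as the paper, which does not give a separate proof but simply remarks that the only place the condition $m_{jj}\notin G$ was used was to ensure that after multiplying column $j$ by $v_{\ell j}^{-1}$ the diagonal entry remains distinct from the non-diagonal entries in its row, and that the rectangle condition is precisely the weaker hypothesis guaranteeing this. One small simplification: your worry about the ordering $\ell$ versus $j$ is unnecessary, since in the Section~4 setup $\ell\le d<j$ always holds, so the rectangle condition as stated (with $i=\ell<j$, $k\neq j$) applies directly.
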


\begin{corollary}
There is a function $f(x)$ such that if $rank(\Gamma)
\leq r$ and $\{A+A\}\subset \Gamma$ then $|A|\leq f(r).$
\end{corollary}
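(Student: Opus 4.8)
The plan is to build, from a finite set $A$ with $\{A+A\}\subset\Gamma$, an $n\times n$ matrix $M$ with $n=|A|$ that satisfies the hypotheses of Theorem \ref{main_2} but has small rank, and thereby bound $|A|$. The natural choice is $m_{ij}=a_i+a_j$ where $A=\{a_1,\dots,a_n\}$; then $m_{ij}\in\Gamma$ for all $i,j$ (in particular for $i\neq j$), so the condition $m_{ij}\in G$ with $G=\Gamma$ of rank at most $r$ is met. The key point is that this matrix has rank at most $3$: writing $m_{ij}=a_i+a_j$ exhibits $M$ as a sum of a rank-one matrix with constant columns, a rank-one matrix with constant rows, so $\mathrm{rank}(M)\le 2$ (indeed each row is $(a_1,\dots,a_n)+a_i(1,\dots,1)$, a combination of two fixed vectors). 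Thus $\mathrm{rank}(M)\le 2$ regardless of $n$.

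Next I must check the rectangle condition for this $M$, since Theorem \ref{main_2} requires it rather than the diagonal-exclusion hypothesis. For indices $i<j\neq k$ the condition demands $m_{jj}m_{ik}\neq m_{jk}m_{ij}$, i.e. $(2a_j)(a_i+a_k)\neq(a_j+a_k)(a_i+a_j)$. Expanding, the left side is $2a_ia_j+2a_ja_k$ and the right side is $a_ia_j+a_j^2+a_ia_k+a_ja_k$, so the inequality is equivalent to $0\neq a_ia_j+a_j^2+a_ia_k+a_ja_k-2a_ia_j-2a_ja_k = a_j^2 - a_ia_j - a_ja_k + a_ia_k = (a_j-a_i)(a_j-a_k)$. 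Since $i,j,k$ are distinct we have $a_j\neq a_i$ and $a_j\neq a_k$, so this product is nonzero and the rectangle condition holds automatically. (This is exactly why the paper weakened the diagonal hypothesis to the rectangle condition: the ``diagonal minus a column scaling'' trick from the proof of Theorem \ref{main} still applies, and here $a_i+a_i=2a_i$ need not lie outside $\Gamma$, but the rectangle condition sidesteps that issue.)

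Now apply Theorem \ref{main_2} with $D=3$: there is a threshold $n_0=n_0(r,3)$ so that any $n\times n$ matrix with $n\ge n_0$, entries off the diagonal in a rank-$\le r$ multiplicative subgroup of $\C^*$, and satisfying the rectangle condition, has rank at least $3$. Our $M$ has rank at most $2$, so necessarily $n<n_0(r,3)$, i.e. $|A|<n_0(r,3)$. Setting $f(r)=n_0(r,3)$ (or $n_0(r,3)-1$) gives the corollary. The only subtlety to watch is degenerate cases — if $2a_i=0$ for some $i$, or more generally if one worries whether $0\in\Gamma$: but $0\notin\C^*$, so $\{A+A\}\subset\Gamma$ forces $a_i+a_j\neq 0$ for all $i,j$, in particular $2a_i\neq 0$, so every entry of $M$ is a nonzero element of $\Gamma$ and there is no division-by-zero issue in the rectangle condition computation. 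The main obstacle, such as it is, is simply verifying the rectangle condition cleanly; the rank bound and the application of Theorem \ref{main_2} are immediate.
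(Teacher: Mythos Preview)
Your proof is correct and follows essentially the same approach as the paper: define $m_{ij}=a_i+a_j$, observe $\mathrm{rank}(M)\le 2$, verify the rectangle condition via the factorization $(2a_j)(a_i+a_k)-(a_j+a_k)(a_i+a_j)=(a_j-a_i)(a_j-a_k)$, and invoke Theorem~\ref{main_2}. The only minor slip is the phrase ``since $i,j,k$ are distinct'': the rectangle condition only requires $i<j$ and $j\neq k$ (allowing $i=k$), but your factorization still gives $(a_j-a_i)(a_j-a_k)\neq 0$ from $i\neq j$ and $j\neq k$ alone, so the argument goes through unchanged.
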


\begin{proof}
If the elements of $A$ are denoted by $\{a_1,\ldots,a_n\}$ then we define
a matrix $M$ by $m_{i,j}=a_i+a_j.$  The rank of M is at most two. All
we have to check is that $M$ satisfies the rectangle condition. The
equation $(x+x)(y+z)=(x+z)(y+x)$  has only solution when $y=x$ or $z=x,$
but these numbers are distinct.
\qed
\end{proof}

\subsection{Multiplicative groups generated by distance sets}

As another application of Theorem \ref{main} we prove that the
distance sets of finite pointsets in $\mathbb{R}^d$ generate high rank
multiplicative groups.

\begin{theorem}
\label{points1}
For any positive integers $r,d$  there is a bound $N=N(r,d)$, such that
if $G$ is a multiplicative subgroup of $\mathbb{R}^*$ of rank at most $r$
and there are $n$ points in $\mathbb{R}^d$ where the pairwise distances
are from $G$ for every pair of points then $n\leq N.$
\end{theorem}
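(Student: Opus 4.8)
The plan is to deduce this from Theorem~\ref{main} applied to the matrix of \emph{squared} pairwise distances. Suppose $p_1,\dots,p_n\in\mathbb{R}^d$ have all pairwise distances $\|p_i-p_j\|$ in $G$; since $0\notin G$ the points are pairwise distinct. Form the real $n\times n$ matrix $M=(m_{ij})$ with $m_{ij}=\|p_i-p_j\|^2$. I would first check that $M$ fits the hypotheses of Theorem~\ref{main}: the diagonal entries are $m_{jj}=0\notin G$, and for $i\neq j$ the entry $m_{ij}=g^2$ with $g=\|p_i-p_j\|\in G$ lies in the subgroup $G^2=\{g^2:g\in G\}$ of $\mathbb{C}^*$, which has rank at most $rank(G)\leq r$. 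So $M$ is an $n\times n$ complex matrix whose off-diagonal entries lie in a multiplicative group of rank $\leq r$ and whose diagonal entries lie outside it.

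The second ingredient is the classical fact that a squared-distance matrix has rank at most $d+2$. Using $\|p_i-p_j\|^2=\|p_i\|^2-2\langle p_i,p_j\rangle+\|p_j\|^2$, set $a=(\|p_i\|^2)_{i=1}^n$ and let $P$ be the $n\times d$ matrix whose $i$-th row is $p_i$. Then
\[
M=a\,\mathbf{1}^{\T}+\mathbf{1}\,a^{\T}-2\,PP^{\T},
\]
and since the two rank-one terms contribute at most $2$ and $PP^{\T}$ has rank at most $d$, we get $rank(M)\leq d+2$.

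Combining the two, apply Theorem~\ref{main} with the given $r$ and with $D:=d+3$. It provides a threshold $n_0(r,d+3)$ beyond which any such matrix $M$ would have $rank(M)\geq d+3$, contradicting $rank(M)\leq d+2$. Hence $n<n_0(r,d+3)$, so $N(r,d):=n_0(r,d+3)$ is a valid bound.

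There is no serious obstacle here; the one point deserving care is the choice to square the distances. Squaring is what simultaneously (i) keeps the off-diagonal entries inside a rank-$\leq r$ multiplicative subgroup of $\mathbb{C}^*$, (ii) makes the diagonal entries (all equal to $0$) automatically lie outside that subgroup, and (iii) produces a matrix whose rank is bounded purely in terms of $d$ — none of which is true for the unsquared distance matrix.
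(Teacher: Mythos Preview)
Your proof is correct and is essentially the paper's own argument: form the squared-distance matrix, observe its rank is at most $d+2$ via the decomposition $M=a\mathbf{1}^{\T}+\mathbf{1}a^{\T}-2PP^{\T}$, and apply Theorem~\ref{main}. One cosmetic remark: since $G$ is a group, $g\in G$ already gives $g^2\in G$, so you need not pass to the subgroup $G^2$---the off-diagonal entries lie in $G$ itself.
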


\begin{proof}
Suppose that there are $T$ points in $\mathbb{R}^d$,
$\{p_1,\ldots,p_T\}$. Let us consider the $T\times T$ matrix, $\Delta$,
where the $\delta_{i,j} $ entry is the square of the the  distance between
$p_i$ and $p_j.$ The diagonal of $\Delta$ contains zeros only, and the
other entries are positive real numbers. The entries are images of
a quadratic polynomial with $2d$ variables, and the 
rank of the matrix is at
most $d+2,$ since it can be written as the linear combination of $d+2$
rank one matrices.
\[
\Delta=X^{(2)}-2\sum_{k=1}^d XY(k)+Y^{(2)}.
\]
Here every entry in the $i$-th row of $X^{(2)}$ is the sum of the
squares of coordinates of $p_i$ and every entry in 
the $j$-th column of $Y^{(2)}$
contains the sum of squares of the coordinates of $q_j.$ In the
$\{i,j\}$ position of $XY(k)$ we have the product of the $k$-th
coordinates of $p_i$ and $q_j.$ Since all diagonal
elements are $0$, Theorem \ref{main} implies that if the rank
of the multiplicative group generated by the non-diagonal elements of
the matrix is at most $r$ then the (matrix) rank of $\Delta$ is at
least as $(\log{T}/r)^{1/8},$ which is larger than $d+2$
for large enough $T.$
\qed
\end{proof}

\subsection{Integral distances}
By Theorem \ref{points1} if all distances determined by a 
set of more than $N(r,d)$ points in $\mathbb{R}^d$ are integers then
there are at least $r+1$ distinct primes that divide at least one of
these distances. Here, however, we can prove a stronger result, with
a much better bound. Before stating and proving it we include a
brief discussion of some of the background about sets determining
integer distances, which are sometimes called {\em integral}
pointsets.

In 1945 Anning and Erd\H{o}s proved in
\cite{AE} (see also in \cite{E}) that if in a set of points in the plane
all pairwise distances are integers 
then the pointset is finite, or all points are on a line.
They asked if there are arbitrarily large integral pointsets in the
plane with no three on a line and no four on a circle. The problem
is still widely open, the best construction is due to Kreisel and Kurz
\cite{KK}, who found seven points using computer search. Another related
question is the Erd\H{o}s-Ulam conjecture, that there are no everywhere
dense pointsets in the plane such that all pairwise distances are
rational. This is also open although there are works showing that the
existence of such sets would contradict the Bombieri-Lang conjecture
\cite{T,Sa,ABT} and the abc conjecture as well \cite{Pa}. In the plane
the diameter of large integral pointsets should be large \cite{Soly,Ku,Av},
but not much is known about the structure of such sets. For dimension
$d>2,$ Nozaki proved that an $n$-element integral pointset has diameter
at least $n^{1/d}$ \cite{N}. As an application of our results and
techniques here
(with a much simpler proof and an improved bound that holds in this
case)
we show that for any integral pointset of $n$ points in $\mathbb{R}^d$
and any prime $p$ smaller than $n^{1/(d+1)}$, the pointset must determine
a distance divisible by $p$.

\begin{theorem}
\label{points2}
For any positive integer $d$ and any prime $p$ there is a threshold, 
$T=T(d,p) \leq {{p+d} \choose {d+1}}+1$
such that any set of more than  $T$ points in $\mathbb{R}^d$
in which all pairwise distances are integers, determines a distance
divisible by $p$.
\end{theorem}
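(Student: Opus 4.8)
The plan is to run the argument behind Theorem~\ref{points1}, but working over the finite field $\F_p$ and replacing the (very weak) bound coming from Theorem~\ref{main} by a direct application of Theorem~\ref{matrix}; this is precisely what turns the bound polynomial in $p$. Arguing by contraposition, suppose $q_1,\dots,q_m\in\R^d$ are points --- necessarily distinct, since otherwise two of them are at distance $0$, which is divisible by $p$ --- with all pairwise distances integers and with no pairwise distance divisible by $p$; the goal is to show $m\le\binom{p+d}{d+1}+1$. As in the proof of Theorem~\ref{points1} put $\delta_{ij}=\|q_i-q_j\|^2\in\Z$. Since $p$ is prime, the hypothesis on $p$ is equivalent to the statement that $\delta_{ij}\not\equiv 0\pmod p$ for all $i\ne j$ (because $\delta_{ij}$ is the square of an integer not divisible by $p$).

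I would then single out one point, say $q_1$, and consider the integer matrix $N=(N_{ij})$, with rows and columns indexed by $2\le i,j\le m$, given by $N_{ij}=\delta_{ij}-\delta_{1i}$; thus $N$ is obtained from the $(m-1)\times(m-1)$ submatrix $(\delta_{ij})_{2\le i,j\le m}$ by subtracting from each row $i$ the scalar $\delta_{1i}$. This scalar row--shift is the crucial move. On the one hand it changes neither the number of distinct entries in a row nor which entries of a row coincide, so the diagonal entry $N_{ii}=-\delta_{1i}$ still differs, modulo $p$, from every off--diagonal entry $N_{ij}=\delta_{ij}-\delta_{1i}$ of its row, now precisely because $\delta_{ij}\not\equiv 0$. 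On the other hand, the identity $\delta_{ij}-\delta_{1i}=\|q_j-q_1\|^2-2\langle q_i-q_1,\,q_j-q_1\rangle$ exhibits $N$ as a rank--one matrix plus a matrix of rank at most $d$, so $\mathrm{rank}_{\mathbb{Q}}(N)\le d+1$. Since $N$ has integer entries, its reduction $\overline N$ modulo $p$ has rank at most $d+1$ over $\F_p$ as well, because reducing an integer matrix modulo $p$ cannot increase its rank.

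Now observe that in each row of $\overline N$ the off--diagonal entries $\delta_{ij}-\delta_{1i}\bmod p$ lie among the $p-1$ values obtained by subtracting the fixed residue $\delta_{1i}$ from a nonzero residue; in particular the portion of each row below the diagonal takes at most $p-1$ distinct values, all different from the diagonal entry of that row. Theorem~\ref{matrix} therefore applies to $\overline N$ with $\varrho\le d+1$ and $s\le p-1$, yielding $m-1\le\binom{(d+1)+(p-1)}{d+1}=\binom{p+d}{d+1}$, that is, $m\le\binom{p+d}{d+1}+1$. Equivalently, every set of more than $T:=\binom{p+d}{d+1}+1$ points in $\R^d$ with integral pairwise distances determines a distance divisible by $p$, as claimed. (For odd $p$ one can do a little better by using that each $\delta_{ij}$ is a \emph{nonzero square} mod $p$, so each row has at most $(p-1)/2$ distinct subdiagonal values; this only improves the constant, not the qualitative statement.)

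I expect the one substantive step to be the rank drop from $d+2$ to $d+1$: the naive matrix $(\delta_{ij})$ of squared distances has rank $d+2$ (as used in the proof of Theorem~\ref{points1}), and the point is that subtracting the scalar $\delta_{1i}$ from row $i$ --- an operation harmless for the hypothesis of Theorem~\ref{matrix} --- cancels exactly one of the rank--one summands, producing the clean exponent $d+1$ and hence the stated bound. Everything else (integrality of the $\delta_{ij}$, the behaviour of rank under reduction modulo $p$, and the translation of ``a distance divisible by $p$'' into ``an entry $\equiv 0$ mod $p$'') is routine once $p$ is prime.
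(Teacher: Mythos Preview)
Your argument is correct and reaches exactly the paper's bound $m\le\binom{p+d}{d+1}+1$, but the route differs from the paper's in one interesting place.

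The paper does \emph{not} perform your row-shift $N_{ij}=\delta_{ij}-\delta_{1i}$. It works with the full $T\times T$ squared-distance matrix $\Delta$ (rank at most $d+2$), applies the single monomial $P(x)=x^{p-1}$ to every entry, observes that over $\F_p$ the resulting matrix is $J-I$ (zeros on the diagonal, ones elsewhere) and hence has rank at least $T-1$, and then invokes the \emph{second} clause of Lemma~\ref{Noga}: for $P(x)=x^k$ the rank bound is $\binom{k+d-1}{k}$ rather than $\binom{k+d}{k}$. With $k=p-1$ and $d$ replaced by $d+2$ this gives $T-1\le\binom{(p-1)+(d+2)-1}{p-1}=\binom{p+d}{d+1}$.

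So the two proofs trade one unit of rank for one unit of polynomial degree: you drop the rank from $d+2$ to $d+1$ by the affine shift and then use Theorem~\ref{matrix} with $s=p-1$; the paper keeps the rank at $d+2$ but exploits that $x^{p-1}$ is a pure power and gains the same unit back from the sharper $\binom{k+d-1}{k}$. Your version makes the role of Theorem~\ref{matrix} more visible and, as you note, immediately suggests the refinement with $s=(p-1)/2$ using that the $\delta_{ij}$ are nonzero squares; the paper's version is a touch shorter since no point needs to be singled out and no row manipulation is required. Both arguments implicitly use that Lemma~\ref{Noga}/Theorem~\ref{matrix} hold over any field, in particular over~$\F_p$.
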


Note that the theorem is not an empty statement, there are arbitrarily
large sets with integer distances. Even in the plane one can find large
sets on a circle such that all pairwise distances are integers (see
\cite{SdZ} for some constructions).

\medskip

\begin{proof}(of Theorem \ref{points2}) 
If $p$ is the smallest prime that does not divide any of the distances
then apply the polynomial $x^{p-1}$ to every entry of the matrix of
the squares of distances. This keeps all diagonal elements $0$, 
and changes
every non-diagonal entry, $d_{ij},$ to $d_{ij}^{p-1} \equiv 1 (\mod p).$
The rank of this matrix over $\mathbb{Z}_p$ is at least $T-1,$
showing that $${\binom{d+1+p-1}{p-1}}=\binom{p+d}{d+1} \geq T-1.$$
Here we applied the slightly better bound from Lemma \ref{Noga}, using that
$x^{p-1}$ is a special polynomial.
\qed
\end{proof}

\section{Concluding remarks and open problems}

\begin{itemize}
\item
Both proofs given here for Theorem \ref{main} provide weak
quantitative bounds, and it will be interesting to improve them.
As mentioned in Section 3, even the very special case of
determining the minimum possible rank of an $n$ by $n$ matrix in
which all non-diagonal elements are powers of $2$, and all diagonal
elements are not, is intriguing. By (a very special case of) 
Theorem \ref{main} (with $r=1$) 
this  minimum tends to infinity with $n$, but the lower bound 
obtained is probably very far from being tight.
The following example shows that this
minimum is at most $O(n^{1/3})$. Put  $m=3^d+1$, let
$P=P_3^d$ be the space of 
all vectors of length $d$ over $F_3$ and let $z$ be an additional
point.
Let $F$ be the collection of all planes in $P$, that is, all
the $2$-dimensional affine subspaces, and let $F'$ be the collection 
of all sets $L \cup \{z\}$ where $L \in F$. Note that each member
of $F'$ is of cardinality $9+1=10$. The intersection of every pair of distinct
members of $F$ is either empty, or a point, or a one-dimensional line,
and therefore the cardinality of each such intersection lies in the set
$\{0,1,3\}$. It follows that the cardinality of the intersection of 
every pair of distinct members of $F'$ is in the set $\{1,2,4\}$,
that is, it is a power of $2$. The Gram matrix of the
characteristic vectors of the elements of $F'$ is an
$|F'|$ by $|F'|$ matrix in which
all diagonal elements are $10$ and every
non-diagonal element is  a power of $2$. The rank of this matrix 
is clearly at most $m=3^d+1$ and its size is
$|F'|=\frac{(3^d)(3^d-1)(3^d-3)}{3^2(3^2-1)(3^2-3)}=\Omega(m^3)$.
\item
The statement of Theorem \ref{points1}  holds if we only assume
that the squares of the pairwise distances between pairs of points
belong to the multiplicative group $G$. This clearly follows from
the proof. Similarly the statement of Theorem \ref{points2} holds 
if we merely assume that the squares of the distances are integers.
\item
Theorem \ref{points2} can be extended
to prime powers.  
That is, every prime power $q=p^k$
divides some square of a distance
determined by any set of more than ${{q+d+1} \choose {d+1}}$ points in
$\mathbb{R}^d $ in which all squares of distances between pairs 
are integers.
The proof follows that of Theorem \ref{points2}, the only
difference is that instead of the polynomial $x^{p-1}$ we use here
the polynomial ${{x-1} \choose {q-1}}$.
By the theorem of Lucas \cite{Lu} the value of this polynomial is  not
$0$ modulo $p$ if and only if $x$ is divisible by $q$. Therefore,
if no square distance is divisible by $q$ then after applying the
above polynomial to every entry of the matrix of square distances 
we get a matrix of full rank modulo $p$, implying the desired
result. This, together with Ramsey's Theorem, also implies that 
for every integer $k$ and every $d$ there is some
$T_0=T_0(k,d)$ so that any set of at least $T_0$
points in  $\mathbb{R}^d $ in which all
square distances are integral determines a square
distance divisible by $k$. To prove it write $k$ as a product 
$k=q_1 q_2 \cdots q_r$ of powers of distinct primes and apply
induction on $r$. For $r=1$ this is the result above for 
prime powers. For $r>1$, by
Ramsey's Theorem and the result for one prime power, any sufficiently
large set of points with all square distances integral 
contains a large subset in which all square distances are divisible
by $q_1$. We can now apply induction to this subset to get in it a
pair of points with square distance divisible by $q_2q_3 \cdots
q_r$, completing the proof. The estimate for $T_0$ here, unlike in
the prime power case, is likely to be very far from being tight.
\item
The assertion of Theorem \ref{points1}  
can be extended
to more general polynomials, including, for
example, the
$\ell_{2r}$ distance raised to the power $2r$ for every even integer
$2r$. More generally, for any fixed polynomial
$P(z)=P(z_1,..,z_d)$ which vanishes at zero, and any set of points
$S=\{x_1, x_2, \ldots ,x_N\}$ in $\mathbb{R}^d$, if $N$ is sufficiently
large as a function of $r$, $d$ and the degree of the polynomial
$P$, then not all the values $P(x_i-x_j)$ for distinct $i,j$ can lie
in a multiplicative group of rank at most $r$. A similar extension
of Theorem \ref{points2} exists as well. The proofs follow the ones of
the above theorems, by considering the $N$ by $N$ matrix
$M_P (S)=(m_{ij})$ defined by $m_{ij}=P(x_i-x_j)$. 
\item
Our final remark, which may well be mentioned somewhere, 
is that by applying 
Theorem \ref{matrix} to the matrix of squares of distances between
pairs of points it follows that for any sequence
$p_1,p_2, \ldots ,p_T$ of $T>{{d+2+s} \choose {d+2}}$  distinct
points in $\mathbb{R}^d$, there is a point $p_i$ determining more than 
$s$ distinct distances from the previous ones.
\end{itemize}

\section{Acknowledgements}
NA is supported in part by NSF grant DMS-1855464 and by BSF grant
2018267.
JS is supported in part by Hungarian National Research Grant KKP 
133819, by NSERC Discovery grant and by OTKA K 119528 grant.

\end{document}